\theoremstyle{plain}
\newtheorem{lma}{Lemma}[section]
\crefname{lma}{Lemma}{Lemmata}
\crefname{thm}{Theorem}{Theorems}
\newtheorem{cor}[lma]{Corollary}
\crefname{cor}{Corollary}{Corollaries}
\crefname{prp}{Proposition}{Propositions}
\theoremstyle{definition}
\crefname{pgr}{Paragraph}{Paragraphs}
\crefname{dfn}{Definition}{Definitions}
\theoremstyle{remark}
\crefname{rmk}{Remark}{Remarks}
\newtheorem{exa}[lma]{Example}
\crefname{exa}{Example}{Examples}
\theoremstyle{plain}
\newcounter{IntroCount}
\newtheorem{thmIntro}[IntroCount]{Theorem}
\crefname{thmIntro}{Theorem}{Theorems}
\def\today{\number\day\space\ifcase\month\or   January\or February\or
   March\or April\or May\or June\or   July\or August\or September\or
   October\or November\or December\fi\   \number\year}
\newcommand{\ZZ}{{\mathbb{Z}}}
\newcommand{\NN}{{\mathbb{N}}}
\newcommand{\CC}{{\mathbb{C}}}
\newcommand{\RR}{{\mathbb{R}}}
\newcommand{\id}{{\mathrm{id}}}
\newcommand{\red}{{\mathrm{red}}}
\newcommand{\ca}{$C^*$-algebra}
\newcommand{\andSep}{\,\,\,\text{ and }\,\,\,}
\DeclareMathOperator{\Res}{Res}
\DeclareMathOperator{\GL}{GL}
\DeclareMathOperator{\QT}{QT}
\DeclareMathOperator{\Cu}{Cu}
\title{Strict comparison for twisted group $\mathrm{C}^*$-algebras}
\author{Sven Raum, Hannes Thiel, Eduard Vilalta}
\address{Sven~Raum,
Institute of Mathematics, University of Potsdam, Karl-Liebknecht-Str. 24-25, 14476 Potsdam, Germany}
\email{sven.raum@uni-potsdam.de}
\urladdr{https://raum-brothers.eu/sven/}
\address{Hannes~Thiel, 
Department of Mathematical Sciences, Chalmers University of Technology and the University of Gothenburg, SE-412 96 Gothenburg, Sweden}
\email{hannes.thiel@chalmers.se}
\urladdr{www.hannesthiel.org}
\address{Eduard Vilalta, 
Departament de Matem\`{a}tiques, Universitat Polit\`{e}cnica de Catalunya - BarcelonaTech (UPC), Diagonal 647, 08028 Barcelona, Spain}
\email[]{eduard.vilalta@upc.edu}
\urladdr{www.eduardvilalta.com}
\thanks{
EV was partially supported by MINECO (grant no.\  PID2023-147110NB-I00) and by the Comissionat per Universitats i Recerca de la Ge\-ne\-ralitat de Ca\-ta\-lu\-nya (grant no.\ 2021 SGR 01015).
HT and EV were partially supported by the Knut and Alice Wallenberg Foundation (KAW 2021.0140).
SR was partially supported by the German Research Foundation (DFG project no.\ 550184791).
}
\subjclass[2010]%
{Primary
46L05; % General theory of C*-algebras
Secondary
19K14, % $ K_0$ as an ordered group, traces
46L80, % $K$-theory and operator algebras 
46L85. %Noncommutative topology
}
\keywords{$C^*$-algebras, strict comparison, pureness, selflessness}
\date{\today}
\begin{document}

%==========================================================================================
\begin{abstract}
We prove that the reduced twisted group \ca{} of any selfless group with the rapid decay property is selfless. 
As an application, we show that twisted group \ca{s} of acylindrically hyperbolic groups (possibly with nontrivial finite radical) and rapid decay are pure, and hence have strict comparison.
\end{abstract}

\maketitle

%==========================================================================================
%==========================================================================================
\section{Introduction}

%==========================================================================================
Strict comparison is a regularity property for \ca{s} originally introduced by Blackadar \cite{Bla88Comparison} as a way to capture an analogue of the classical \emph{comparison theorem} for projections in $\mathrm{II}_1$-factors in the setting of simple \ca{s}. 
Since its inception, strict comparison has played a central role in the Elliott classification program and the renowned Toms-Winter conjecture \cite{MatSat12StrComparison,Win18ICM,Thi20RksOps}.
Strict comparison and various incarnations of it also play an important role in the analysis of non-simple \ca{s}. 
These generalizations have found applications in diverse areas, such as time-frequency analysis \cite{BedEnsVel22SmoothOrbStrComp} and the study of topological dynamical systems \cite{Ker20DimCompAlmFin}.
By \emph{strict comparison} we always mean strict comparison of positive elements by quasitraces, and we refer to \cite[Paragraph~3.4]{AntPerThiVil24arX:PureCAlgs} for details.

More recently, strict comparison has attracted significant interest in the context of \ca{s} arising from combinatorial or algebraic data. 
This includes not only the simple, nuclear case ---where comparison properties are tightly connected to classification results--- but also the simple, non-nuclear setting \cite{AmrGaoElaPat24arX:StrCompRedGpCAlgs, KunScha25arX:NegTarski, Vig25arX:StPropHighRankLat, HayKunRob25arX:SelflessRedFreeProd}, as well as certain non-simple but nuclear algebras~\cite{EnsVil25arX:ZstableTwGp,HirRorWin07CXAlgStabSSA,RobTik17NucDimNonSimple}.

Because strict comparison itself is not stable under many common constructions, one often works with stronger but more robust properties. 
These include notions like pureness~\cite{Win12NuclDimZstable, AntPerThiVil24arX:PureCAlgs}
and selflessness~\cite{Rob25aim:Selfless}, which behave well under various operations and imply strict comparison. 
A simplified diagram illustrating the relationships among these regularity properties is presented below.

\begin{center}
    \begin{tikzcd}[row sep=2pt,arrows=Rightarrow]
        \text{Nuclear:}& \text{$\mathcal{Z}$-stability} \arrow[dr] & \\
         & & \text{Pureness} \arrow[r] & \text{Strict comparison} & \\
        \text{Non-nuclear:}& \text{Selflessness} \arrow[ru] & 
    \end{tikzcd}
\end{center}

This paper focuses on reduced group \ca{s} $C^*_\red(G)$ and their twisted counterparts $C^*_\red(G,\sigma)$ for non-amenable groups~$G$. 
In the untwisted case, a recent breakthrough in \cite{AmrGaoElaPat24arX:StrCompRedGpCAlgs} shows that a broad class of simple group \ca{s} are \emph{selfless}, and therefore pure and in particular possess strict comparison. 
Specifically, these results apply to reduced group \ca{s} of \emph{selfless} groups with the rapid decay property ---which include all finitely generated, acylindrically hyperbolic groups with trivial finite radical and rapid decay by \cite[Theorem~3.3]{AmrGaoElaPat24arX:StrCompRedGpCAlgs}.  
See also \cite{Vig25arX:StPropHighRankLat}, which proves that cocompact lattices in $\mathrm{PSL}(n, K)$ for $K$ a local field are selfless, which in characteristic zero combines with results on the rapid decay property \cite{RamRobSte98haagerupinequality,Lafforgue00propertyrd}.

A recent trend in the theory of twisted group \ca{s} has been to extend structural results from the untwisted setting to the twisted one; 
see \cite{BryKen18ResTwCrProd, AusRau26jimj:DetectIdlsRedCrProd, Rau24arX:TwAcylHypGps}. 
When paired with classical decomposition results, these extensions not only provide new results in the twisted case, but can also be used to cover more general classes of untwisted reduced group \ca{s}; 
see, for example, \cite{GerOsi20InvGpCAlgAcylHyperbolic} and \cite{Rau24arX:TwAcylHypGps}.
In this note, we continue this line of inquiry by proving that reduced twisted group \ca{s} of selfless groups with rapid decay are also selfless. 
We use this result to show that any reduced twisted group \ca{} over a finitely generated, acylindrically hyperbolic group with rapid decay is pure. 
This is new even in the untwisted setting, since our result removes the assumption of having trivial finite radical from \cite[Theorem~3.3]{AmrGaoElaPat24arX:StrCompRedGpCAlgs} at the expense of passing from selflessness to pureness.
As observed in \cite[Note after Theorem~B]{AmrGaoElaPat24arX:StrCompRedGpCAlgs}, the assumption of trivial finite radical is in fact necessary for selflessness of a reduced group \ca{}.

%==========================================================================================
\begin{thmIntro}
\label{prp:ThmA}
Let $G$ be a selfless group with rapid decay and let $\sigma\in Z^2(G,\mathbb{T})$. 
Then the twisted reduced group \ca{} $C^*_\red(G,\sigma)$ is selfless.
\end{thmIntro}

%==========================================================================================
As a consequence of \cref{prp:ThmA}, all \ca{s} covered in its statement are simple, separable, unital, and pure (in particular, they have strict comparison), with stable rank one and with a unique tracial state which is also the unique quasitracial state.
This leads to a computation of the Cuntz semigroup $\Cu(C^*_\red(G,\sigma))$ as $V(C^*_\red(G,\sigma)) \sqcup (0,\infty]$;
see \cref{prp:MainCor}.

%==========================================================================================
\begin{thmIntro}
\label{prp:ThmB}
Let $G$ be a countable, acylindrically hyperbolic group with rapid decay, and let $\sigma\in Z^2(G,\mathbb{T})$. 
Then $C^*_\red(G,\sigma)$ is pure. 
In particular, it has strict comparison.
\end{thmIntro}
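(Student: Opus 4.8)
The plan is to deduce \cref{prp:ThmB} from \cref{prp:ThmA} together with the structure theory of acylindrically hyperbolic groups. The key obstruction to applying \cref{prp:ThmA} directly is that a finitely generated, acylindrically hyperbolic group $G$ may have a nontrivial finite radical, whereas \cref{prp:ThmA} requires selflessness of $G$, which in turn forces the finite radical to be trivial. So the proof must reduce the general case to the trivial-finite-radical case and then recombine.

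\textbf{Step 1 (splitting off the finite radical).}
I would let $K \trianglelefteq G$ denote the finite radical of $G$, that is, the largest finite normal subgroup, and form the quotient $Q := G/K$. The group $Q$ is again finitely generated and acylindrically hyperbolic, and crucially it has trivial finite radical; it also inherits the rapid decay property. By \cite[Theorem~3.3]{AmrGaoElaPat24arX:StrCompRedGpCAlgs}, $Q$ is therefore selfless, and \cref{prp:ThmA} applies to any twisted reduced group \ca{} $C^*_\red(Q,\tau)$, showing it is selfless, hence pure.

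\textbf{Step 2 (decomposing the twisted reduced group \ca{} along $K$).}
The heart of the argument is to express $C^*_\red(G,\sigma)$ in terms of the algebras $C^*_\red(Q,\tau)$. Since $K$ is finite and normal, I would invoke the decomposition results for twisted reduced group \ca{s} over normal subgroups developed in \cite{Rau24arX:TwAcylHypGps} (see also \cite{GerOsi20InvGpCAlgAcylHyperbolic} and \cite{BryKen18ResTwCrProd}). Finiteness of $K$ makes $C^*_\red(K,\sigma|_K)$ finite dimensional, so this decomposition should realize $C^*_\red(G,\sigma)$ as a finite direct sum (indexed by the $G$-orbits of irreducible projective representations of $K$) of matrix amplifications of twisted reduced group \ca{s} $C^*_\red(Q_i,\tau_i)$, where each $Q_i$ is a finite-index subgroup of (a quotient related to) $Q$ and each $\tau_i$ is an induced $2$-cocycle. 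This is precisely the twisted translation of a classical decomposition theorem, and the stabilizer subgroups $Q_i$ are themselves finitely generated, acylindrically hyperbolic with trivial finite radical and rapid decay.

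\textbf{Step 3 (assembling pureness).}
Finally I would use the permanence properties of pureness. By Step 1 and Step 2, each building block $C^*_\red(Q_i,\tau_i)$ is pure (via \cref{prp:ThmA} applied to the selfless group $Q_i$). Pureness is preserved under passing to matrix amplifications and under finite direct sums, so $C^*_\red(G,\sigma)$ is pure. Strict comparison then follows immediately from pureness, as indicated in the regularity diagram in the introduction.

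\textbf{The main obstacle.}
I expect the hard part to be Step 2: making the finite-radical decomposition precise in the \emph{twisted} setting. One must track how $\sigma$ restricts to $K$, how it descends and is modified on the quotient $Q$ and on the stabilizers $Q_i$ (the cocycle $\tau_i$ is only well-defined up to coboundary, and its cohomology class depends on a choice of projective representation of $K$), and verify that each stabilizer $Q_i$ still has trivial finite radical so that \cite[Theorem~3.3]{AmrGaoElaPat24arX:StrCompRedGpCAlgs} applies. The passage from the direct selflessness statement of \cref{prp:ThmA} to mere pureness of the direct sum is exactly what permits the finite radical to be nontrivial, reflecting the remark after \cite[Note after Theorem~B]{AmrGaoElaPat24arX:StrCompRedGpCAlgs} that selflessness would fail here.
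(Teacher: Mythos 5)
Your proposal follows essentially the same route as the paper: split off the finite radical $K$, decompose $C^*_\red(G,\sigma)$ via the twisted crossed product $\CC[K,\sigma]\rtimes_{\red} G/K$ and Green's imprimitivity theorem into a finite direct sum of matrix amplifications of twisted group \ca{s} over finite-index subgroups of $G/K$, verify those subgroups are finitely generated, acylindrically hyperbolic, of trivial finite radical (the paper isolates this as \cref{prp:TrivialFinRad}) and of rapid decay, and then assemble pureness from \cref{prp:MainCor}. The argument is correct and matches the paper's proof in all essentials.
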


%==========================================================================================
It is worth noting that \cref{prp:ThmB} covers all non-elementary, hyperbolic groups, also allowing for a nontrivial finite radical;
see \cref{prp:Hyperbolic}.
As a concrete example, we obtain that the reduced group \ca{} $C^*_\red(\GL(2,\ZZ))$ is pure.
Indeed, the group $\GL(2,\mathbb{Z})$ is virtually free and therefore hyperbolic, but its centre witnesses a nontrivial finite radical and so it was not covered by the results of \cite{AmrGaoElaPat24arX:StrCompRedGpCAlgs}.  As already pointed out, this is due to the fact that a selfless C*-algebra necessary is simple, underscoring the benefit of working with the more flexible concept of pureness in order to obtain results without any assumptions on the finite radical. We discuss further examples of pure twisted group C*-algebras in \cref{exa:twists} and \cref{exa:lattices-sl}.

After the authors made this work publicly available, further work on selflessness and pureness of group C*-algebras has appeared. We highlight the combination of Ozawa's work \cite{Oza25arX:Proximality}, establishing selflessness of all acylindrically hyperbolic groups with trivial finite radical, with work in \cite{FloKliOCPa26arX:Pureness}, which--based on Ozawa's work--establishes pureness of reduced group C*-algebras of arbitrary acylindrically hyperbolic groups.

\subsection*{Acknowledgments} 
This work was carried out when SR visited HT and EV at Chalmers University of Technology. They are grateful to the institution for its hospitality. EV and HT also wish to thank David Jekel for discussions on the subject.  The authors thank Itamar Vigdorovich for helpful comments on a first version of this work.

%==========================================================================================
%==========================================================================================
\section{The proofs}

%==========================================================================================
Given a subset $X$ in a group $G$, we use $B_X(n)$ to denote the set of elements in~$G$ that are the product of at most $n$ elements in $X \cup X^{-1}$.
Following \cite[Definition~3.1]{AmrGaoElaPat24arX:StrCompRedGpCAlgs}, we say that a group $G$ with finite generating set $X$ is \emph{selfless} if there exists a sub-exponential map $f\colon\NN\to\RR$ such that, for every $n\geq 1$, there is a group homomorphism $\varphi_n\colon G* \ZZ\to G$ such that
\begin{itemize}
\item[(i)] 
$\varphi_n$ is injective on $B_{X\cup\{a\}}(n)$, where $a$ denotes a fixed generator in $\ZZ$;
\item[(ii)] 
$\varphi_n\big( B_{X\cup\{a\}}(n) \big) \subseteq B_X(f(n))$;
\item[(iii)] 
$\varphi_n|_G=\id_G$.
\end{itemize}
    
%==========================================================================================
\begin{proof}[Proof of \cref{prp:ThmA}]
Let $G$ be a selfless group with rapid decay and let $\sigma\in Z^2(G,\mathbb{T})$.
We need to show that $C^*_\red(G,\sigma)$ is selfless.

Using that $G$ is selfless, fix group homomorphism $\varphi_n\colon G* \ZZ\to G$ as in the definition above.
For each $n$, consider the pull-back $\sigma_n :=\varphi_n^*\sigma\in Z^2(G * \ZZ,\mathbb{T})$, defined by the formula $\sigma_n(x,y) = \sigma(\varphi_n(x), \varphi_n(y))$.  
Since $\varphi_n|_G = \id_G$, the restriction satisfies $\Res^{G * \ZZ}_G\sigma_n = \sigma = \Res^{G * \ZZ}_G (\sigma * 1)$, where $\sigma * 1 \in Z^2(G * \ZZ, \mathbb{T})$ denote the unique 2-cocycle restricting to $\sigma$ on $G$ and to the constant map $1$ on $\ZZ$.
As $\ZZ$ has cohomological dimension $1$, the Mayer-Vietoris sequence calculating cohomology of free products (see e.g. \cite[Chapter~VII.9]{Bro82CohomologyGps} for the homological counterpart), shows that the restriction $\Res^{G * \ZZ}_G$ is an isomorphism from $H^2(G*\ZZ,\mathbb{T})$ to $H^2(G,\mathbb{T})$.  
So $\sigma_n$ is cohomologuous to $\sigma * 1$, and we let $\xi_n\colon G*\mathbb{Z}\to \mathbb{T}$ be a normalised function witnessing this, that is
\begin{equation}
\label{eq:xi-n}
\xi_n(xy) \cdot (\sigma* 1)(x,y)
= \xi_n(x) \cdot \xi_n(y) \cdot \sigma_n(x,y)
\quad\text{for all $x,y \in G\ast\ZZ$}
\end{equation}
and $\xi_n(e)=1$.

The twisted group algebra $\CC[G,\sigma]$ is the vector space of finite sums $\sum_{g \in G} a_g u_g$ with $a_g \in \CC$, and with (twisted) multiplication and involution induced by the rules
\[
u_g u_h = \sigma(g,h) u_{gh}, \andSep
(u_g)^\ast = \overline{\sigma(g,g^{-1})} u_{g^{-1}}
\quad\text{for $g,h \in G$}.
\]
Let $\pi_n\colon \CC[G*\ZZ,\sigma* 1]\to\CC[G,\sigma]$ be the linear map satisfying
\[
\pi_n (u_x)
=\xi_n(x)u_{\varphi_n (x)}
\quad\text{for $x\in G\ast\ZZ$}.
\]

We claim that $\pi_n$ is a $\ast$-homomorphism.
By linearity, it suffices to verify this on basis elements.
To see that $\pi_n$ is involutive, let $x \in G\ast\ZZ$.
Then
\[
\pi_n(u_x^*)
= \pi_n\Big( \overline{(\sigma \ast 1)(x,x^{-1})} u_{x^{-1}} \Big)
= \overline{(\sigma \ast 1)(x,x^{-1})} \xi_n(x^{-1}) u_{\varphi_n(x^{-1})},
\]
and
\[
\pi_n(u_x)^*
%= \big( \xi_n(x) u_{\varphi_n (x)} \big)^*
= \overline{\xi_n(x)} \ \overline{\sigma(\varphi_n(x),\varphi_n(x)^{-1})}
u_{\varphi_n(x)^{-1}}
= \overline{\xi_n(x)} \ \overline{\sigma_n(x,x^{-1})}
u_{\varphi_n(x^{-1})},
\]
and these agree using \eqref{eq:xi-n}. % and that $\varphi_n$ is a group homomorphism.

To show multiplicativity, let $x,y \in G \ast \ZZ$. 
Then
\[
\pi_n(u_xu_y)
= \pi_n\big( (\sigma \ast 1 )(x,y) u_{xy} \big)
=\xi_n(xy) (\sigma \ast 1 )(x,y) u_{\varphi_n(xy)}
\]
and
\begin{align*}
\pi_n(u_x)\pi_n(u_y)
&= \xi_n(x) u_{\varphi_n(x)} \xi_n(y) u_{\varphi_n(y)} \\
& =\xi_n(x) \xi_n(y) \sigma\big( \varphi_n(x),\varphi_n(y) \big) u_{\varphi_n(x)\varphi_n(y)} \\
& =\xi_n(x) \xi_n(y) \sigma_n(x,y) u_{\varphi_n(xy)}
\end{align*}
and again these agree by \eqref{eq:xi-n}.

The rest of the proof follows as in \cite[Theorem~3.5, Proposition~3.7]{AmrGaoElaPat24arX:StrCompRedGpCAlgs}.
\end{proof}

%==========================================================================================
\begin{cor}
\label{prp:MainCor}
Let $G$ be a selfless group with rapid decay and let $\sigma\in Z^2(G,\mathbb{T})$. 
Then $C^*_\red(G,\sigma)$ is simple, separable, unital, pure (in particular, it has strict comparison), with stable rank one, and with a unique tracial state which is also the unique quasitracial state.
The Cuntz semigroup can be computed as
\[
\Cu(C^*_\red(G,\sigma)) \cong V(C^*_\red(G,\sigma)) \sqcup (0,\infty].
\]
\end{cor}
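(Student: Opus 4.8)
The plan is to derive every listed property either from the selflessness of $A := C^*_\red(G,\sigma)$ established in \cref{prp:ThmA}, or from elementary features of reduced twisted group \ca{s}. Unitality is immediate, since the canonical unitary $u_e$ is a unit for $A$. Separability holds because selflessness of $G$ is formulated relative to a \emph{finite} generating set $X$, so that $G$ is countable and the twisted group algebra $\CC[G,\sigma]$ is a countable-dimensional dense $\ast$-subalgebra of $A$.

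The core input is \cref{prp:ThmA}, giving that $A$ is selfless. I would then feed this into the structure theory of selfless \ca{s} from \cite{Rob24arX:Selfless}: every selfless \ca{} is simple and has stable rank one, and---as recorded in the diagram in the introduction---is pure, hence has strict comparison of positive elements by quasitraces (see also \cite{AntPerThiVil24arX:PureCAlgs}). Since a selfless group is infinite, $A$ is infinite-dimensional; being simple with stable rank one, it is in particular stably finite.

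For the traces, I would begin with the canonical faithful tracial state $\tau$ on $A$, determined by $\tau(u_e)=1$ and $\tau(u_g)=0$ for $g\neq e$, and then establish its uniqueness among the quasitracial states, adapting the argument of \cite[Proposition~3.7]{AmrGaoElaPat24arX:StrCompRedGpCAlgs} to the twisted setting; this exhibits $\tau$ simultaneously as the unique tracial and the unique quasitracial state, so that $\QT(A)$ reduces to a single point. This is the step I expect to require the most genuine input: pureness by itself does not pin down the trace simplex, and one must exploit the selfless embeddings of $A$ (or the structural rigidity they impose) to rule out further (quasi)traces.

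With simplicity, unitality, stable rank one, strict comparison, and a unique quasitracial state in hand, the Cuntz semigroup computation is then formal. I would invoke the decomposition available for such algebras (see \cite{AntPerThiVil24arX:PureCAlgs}): stable rank one splits $\Cu(A)$ into the classes of projections, which form the semigroup $V(A)$, and the purely positive elements; strict comparison together with the single rank function $d_\tau$ induced by $\tau$ identifies the purely positive part, as an ordered semigroup, with $(0,\infty]$. Assembling the two pieces gives $\Cu(A)\cong V(A)\sqcup(0,\infty]$.
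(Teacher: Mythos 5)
Your proposal follows essentially the same route as the paper: unitality and separability are elementary, everything else is funneled through selflessness of $A=C^*_\red(G,\sigma)$ from \cref{prp:ThmA} and Robert's structure theory, and the Cuntz semigroup is assembled from the compact (projection) part $V(A)$ and the soft part identified with $(0,\infty]$ via the unique rank function. The one place where you diverge---and where you over-estimate the work needed---is the trace step: you propose to adapt \cite[Proposition~3.7]{AmrGaoElaPat24arX:StrCompRedGpCAlgs} to the twisted setting to show that the canonical trace is the unique (quasi)tracial state, calling this the step requiring ``the most genuine input.'' But that adaptation is exactly what the proof of \cref{prp:ThmA} already carries out in order to establish selflessness of $A$; once $A$ is known to be selfless, \cite[Theorem~3.1]{Rob24arX:Selfless} hands you simplicity, stable rank one, strict comparison, \emph{and} a unique tracial state which is also the unique quasitracial state, all in one citation. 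So there is no remaining analytic work at the level of the corollary, and pureness then follows from these properties via \cite[Theorem~5.13]{AntPerThiVil24arX:PureCAlgs} rather than being read off the introductory diagram. For the Cuntz semigroup, the paper makes the decomposition precise by citing the compact/soft dichotomy for simple stably finite \ca{s} and the identification of the nonzero soft elements with lower-semicontinuous affine functions on $\QT(A)$ for unital, separable, pure algebras \cite{AntPerThi18TensorProdCu}; your sketch is the same argument, and you correctly note that pureness (not just strict comparison) is needed to realize all of $(0,\infty]$.
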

\begin{proof}
It is clear that $C^*_\red(G,\sigma)$ is separable and unital.
Further, by \cite[Theorem~3.1]{Rob25aim:Selfless}, $C^*_\red(G,\sigma)$ is simple, has stable rank one, has strict comparison, and has a unique tracial state, which is also the unique quasitracial state.
Consequently, $C^*_\red(G,\sigma)$ is pure, for example by \cite[Theorem~5.13]{AntPerThiVil24arX:PureCAlgs}.

An element in the Cuntz semigroup of a simple, stably finite \ca{} $A$ is either compact, or nonzero and soft;
see \cite[Proposition~5.3.16]{AntPerThi18TensorProdCu}.
The compact elements form a submonoid that is naturally isomorphic to the Murray-von Neumann semigroup $V(A)$.
If $A$ is also unital, separable and pure, then the nonzero, soft elements form a subsemigroup that is naturally isomorphic to the semigroup of lower-semicontinuous, affine functions $\QT(A)\to(0,\infty]$, where $\QT(A)$ denotes the Choquet simplex of quasitracial states;
see \cite[Section~7.3]{AntPerThi18TensorProdCu}.
Now, the computation of $\Cu(C^*_\red(G,\sigma))$ follows using that $\QT(C^*_\red(G,\sigma))$ is a singleton.
\end{proof}

%==========================================================================================
\begin{lma}
\label{prp:TrivialFinRad}
Let $G$ be a group with trivial finite radical, and let $H \subseteq G$ be a subgroup of finite index.
Then $H$ has trivial finite radical.
\end{lma}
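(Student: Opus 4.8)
The plan is to show that every finite normal subgroup $N \trianglelefteq H$ is contained in a finite normal subgroup of $G$; since the hypothesis that $G$ has trivial finite radical means precisely that $G$ has no nontrivial finite normal subgroup, this forces $N = \{e\}$, which is exactly the assertion that $H$ has trivial finite radical. The guiding principle is that finiteness of the index $[G:H]$ should let us control all $G$-conjugates of $N$, even though $N$ itself need not be normal in $G$.

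First I would observe that, because $N$ is normal in $H$, the conjugate $gNg^{-1}$ depends only on the coset $gH$: for $h \in H$ one has $(gh)N(gh)^{-1} = g(hNh^{-1})g^{-1} = gNg^{-1}$. Hence, fixing coset representatives $g_1, \dots, g_k$ of $G/H$ with $k = [G:H] < \infty$, the set of all $G$-conjugates of $N$ is exhausted by the finitely many finite subgroups $N_i := g_i N g_i^{-1}$. Consequently $S := \bigcup_{i=1}^{k} N_i$ is a \emph{finite} subset of $G$, it consists entirely of torsion elements (each $N_i$ being finite), and it is invariant under conjugation by $G$, since conjugation by any $g$ permutes the collection $\{N_1, \dots, N_k\}$.

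The key step, and the main obstacle, is to pass from the finite conjugation-invariant set $S$ to a finite normal \emph{subgroup}. In general the subgroup generated by finitely many finite subgroups, or the normal closure of a single finite subgroup, may well be infinite (as in a free product of finite groups), so finiteness is not automatic and must be extracted from the special structure of $S$. This is resolved by the classical lemma of Dietzmann, which guarantees that a finite, conjugation-invariant set of torsion elements generates a finite subgroup. Applying it to $S$ shows that $M := \langle S \rangle$ is finite, and $M$ is normal in $G$ because $g M g^{-1} = \langle g S g^{-1} \rangle = \langle S \rangle = M$. Thus $M$ is a finite normal subgroup of $G$ containing $N$.

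Finally, since $G$ has trivial finite radical, the finite normal subgroup $M$ is trivial, whence $N \subseteq M = \{e\}$. As $N$ was an arbitrary finite normal subgroup of $H$, this proves that $H$ has trivial finite radical.
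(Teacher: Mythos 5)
Your proof is correct and follows essentially the same route as the paper: both arguments hinge on Dietzmann's (Dicman's) Lemma together with the observation that finite index of $H$ in $G$ forces finiteness of the relevant set of $G$-conjugates. The only cosmetic difference is that you apply the lemma to the union of the $G$-conjugates of a finite normal subgroup $N \trianglelefteq H$, whereas the paper first characterizes elements of the finite radical as torsion elements with finitely many conjugates and then transfers that elementwise property from $H$ to $G$.
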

\begin{proof}
We first recall a useful characterization of the finite radical:
\emph{An element~$x$ in a group~$K$ belongs to the finite radical if and only if $x$ has finite order and finitely many conjugates.}
The forward implication is obvious.
For the backward implication, assume that $x \in K$ has finite order and finitely many conjugates.
Then the set
\[
X := \big\{ yx^ny^{-1} : y \in K, n \in \ZZ \big\}
\]
is finite and it contains all conjugacy classes of its elements.
Then it follows from Dicman's Lemma (\cite[14.5.7]{Rob96CourseThyGps2ed}) that $X$ generates a finite normal subgroup of~$K$, which shows that $x$ belongs to the finite radical of $K$.

\smallskip

We now prove the lemma.
Let $x \in H$ belong to the finite radical of $H$.
Then~$x$ has finite order and finitely many conjugates in~$H$.
Since $H$ has finite index in $G$, it follows that $x$ has also finitely many conjugates in~$G$.
Using the above observation, we see that $x$ belongs to the finite radical of $G$, and thus $x=e$, showing that $H$ has trivial finite radical.
\end{proof}

%==========================================================================================
\begin{proof}[Proof of \cref{prp:ThmB}]
Assume first that $G$ is finitely generated, that is, let $G$ be a finitely generated, acylindrically hyperbolic group with rapid decay, and let $\sigma\in Z^2(G,\mathbb{T})$. 
We show that $C^*_\red(G,\sigma)$ is pure. 

Proceeding as in the proof of \cite[Theorem~1]{Rau24arX:TwAcylHypGps}, let $K$ be the finite radical of~$G$. 
Then, using \cite[Theorem~2.1]{Bed91GpsSimpleCAlgs}, the twisted group \ca{} $C^*_\red(G,\sigma)$ is isomorphic to the reduced twisted crossed product $\CC[K,\sigma]\rtimes_{\alpha,\rho,\red} G/K$, where we denote by $\sigma$ also its restriction to $K$. 
Since $\CC[K,\sigma]$ is finite-dimensional, it follows from \cite[Theorem~2.13]{Gre80StrImprimitivity} that this twisted crossed product can be written as a finite direct sum where each summand is a matrix amplification of a reduced twisted group \ca{} over a finite-index subgroup of $G/K$.

Note that $G/K$ has trivial finite radical, and hence, by \cref{prp:TrivialFinRad}, so does every finite-index subgroup of $G/K$.
Since $G/K$ is finitely generated, Schreier's Lemma implies that all its finite-index subgroups are also finitely generated.
Further, acylindric hyperbolicity passes to quotients by finite, normals subgroups, as well as to finite-index subgroups by \cite[Lemma~3.9]{MinOsi15AcylHypGpsTrees} (see also \cite[Lemma~1]{MinOsi19CorrAcylHypGpsTrees}).
Thus, every finite-index subgroup of~$G/K$ is finitely generated, acylindrically hyperbolic and has trivial finite radical, and is therefore selfless by \cite[Theorem~3.3]{AmrGaoElaPat24arX:StrCompRedGpCAlgs}.
 
Next, by \cite[Propositions~2.1.1, 2.1.4]{Jol90RapDecrRedGp} the rapid decay property passes to quotients by finite, normal subgroups, as well as to subgroups.
Summarizing, finite-index subgroups of $G/K$ are selfless and have rapid decay, and therefore their twisted, reduced group \ca{s} are pure by \cref{prp:MainCor}.

It follows that $C^*_\red(G,\sigma)$ is a finite direct sum of matrix amplifications of pure \ca{s}, and hence pure itself.

Now let $G$ be any countable, acylindrically hyperbolic group with rapid decay.  By \cite[Theorem 1.2]{Osi16AcylHypGps}, there is a non-elementary acylindrical action of $G$ on a hyperbolic space $X$ and $G$ is not virtually cyclic.  So by \cite[Theorem 1.1]{Osi16AcylHypGps}, there are infinitely many pairwise independent loxodromic elements in $G$.  Fix two such elements $g,h \in G$.  Given an arbitrary finite subset $F \subseteq G$, the group $H = \langle F \cup \{g,h\} \rangle$ acts acylindrically hyperbolic on $X$ and its limit set contains the limit set of $\langle g,h \rangle$.  So the action of $H$ on $X$ is non-elementary.  This shows that $H$ is acylindrically hyperbolic.  Since rapid decay passes to arbitrary subgroups by \cite[Proposition 2.1.1]{Jol90RapDecrRedGp}, we have shown that $G$ is the directed union of finitely generated, acylindrically hyperbolic subgroups with rapid decay. Using that pureness passes to inductive limits \cite[Theorem~D]{PerThiVil25arX}, the result follows from the finitely generated case.
\end{proof}

%==========================================================================================
\begin{cor}
\label{prp:Hyperbolic}
Let $G$ be a non-elementary, hyperbolic group and let $\sigma\in Z^2(G,\mathbb{T})$. 
Then $C^*_\red(G,\sigma)$ is pure.
\end{cor}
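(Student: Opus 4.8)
The plan is to deduce this corollary directly from \cref{prp:ThmB} by checking that a nonelementary hyperbolic group $G$ satisfies all three of its hypotheses: finite generation, acylindrical hyperbolicity, and the rapid decay property. The corollary is then a pure specialization, with no new analysis of the \ca{} $C^*_\red(G,\sigma)$ required beyond what \cref{prp:ThmB} already provides.

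Finite generation is automatic, since every Gromov hyperbolic group is finitely generated by definition. For acylindrical hyperbolicity, I would invoke the standard fact that a nonelementary hyperbolic group acts acylindrically and nonelementarily on its Cayley graph (equivalently, on the associated hyperbolic space), and is therefore acylindrically hyperbolic. This is precisely the point at which the nonelementary hypothesis enters: the elementary hyperbolic groups --- those that are finite or virtually cyclic --- fail to be acylindrically hyperbolic, so the assumption cannot be dropped. For the rapid decay property, I would cite the classical theorem, due to de la Harpe building on work of Jolissaint, that every hyperbolic group has property~RD.

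Having verified the three hypotheses, \cref{prp:ThmB} applies verbatim and yields that $C^*_\red(G,\sigma)$ is pure. The only real work here is not a mathematical obstacle but a bookkeeping one: correctly sourcing and attributing the two external inputs --- acylindrical hyperbolicity of nonelementary hyperbolic groups, and property~RD for hyperbolic groups. Once these citations are in place, the argument reduces to a single application of \cref{prp:ThmB}, and I would expect the written proof to be correspondingly short.
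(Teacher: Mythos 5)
Your proposal is correct and matches the paper's proof essentially verbatim: both verify finite generation (by definition of hyperbolicity), acylindrical hyperbolicity of nonelementary hyperbolic groups (the paper cites \cite[Appendix~8]{Osi16AcylHypGps}), and rapid decay (the paper cites \cite{Jol90RapDecrRedGp, Har88GpsHyperboliques}), and then apply \cref{prp:ThmB}. No differences worth noting.
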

\begin{proof}
By definition, every hyperbolic group is finitely generated.
As noted in \cite[Appendix~8]{Osi16AcylHypGps}, non-elementary, hyperbolic groups are also acylindrically hyperbolic.
Further, hyperbolic groups enjoy the rapid decay property by \cite{Jol90RapDecrRedGp, Har88GpsHyperboliques}.  
Thus, \cref{prp:ThmB} applies.
\end{proof}

While free groups do not have any non-trivial twisted group C*-algebras due to the fact that $\mathrm{H}^2(\mathbb{F}_n, \mathbb{T})$ vanishes, many other groups to which \cref{prp:ThmA} applies have non-trivial second cohomology with coefficients in the circle group.
\begin{exa}
  \label{exa:twists}
  Denote by $\Sigma_g$ the closed surface of genus $g \geq 2$ and $G = \pi_1(\Sigma_g)$ the associated surface group.  Then $G$ is torsion-free hyperbolic.  Its homology groups are $\mathrm{H}_0(G, \ZZ) \cong \ZZ$, $\mathrm{H}_1(G, \ZZ) \cong \ZZ^{2g}$ and $\mathrm{H}_2(G, \ZZ) \cong \ZZ$.  So the universal coefficient theorem yields an isomorphism $\mathrm{H}^2(G, \mathbb{T}) \cong \mathrm{Hom}(\mathbb{Z}, \mathbb{T}) \cong \mathbb{T}$.  For $\theta \in \mathbb{T}$, we denote by $\sigma_\theta \in \mathrm{H}^2(G, \mathbb{T})$ the unique 2-cocycle mapping to $\theta$.  Then $C^*_\red(G, \sigma_\theta)$ is selfless by \cref{prp:ThmA}.
\end{exa}
%==========================================================================================
The next example shows how Vigdorovich's work on cocompact lattices in projective special linear groups \cite{Vig25arX:StPropHighRankLat} can be lifted to special linear groups.  We remark that cocompact lattices in $\mathrm{SL}(3, K)$ can have a non-trivial centre, so that $C^*_\red(G)$ cannot be simple, hence not selfless.
\begin{exa}
  \label{exa:lattices-sl}
  Let $K$ be a local field of characteristic zero and $G \leq \mathrm{SL}(3,K)$ a cocompact lattice.  Given any $\sigma \in \mathrm{H}^2(G, \mathbb{T})$, we show that $C^*_\red(G, \sigma)$ is pure.  Denote by $\mathrm{Ad}\colon \mathrm{SL}(3,K) \longrightarrow \mathrm{PSL}(3,K)$ the adjoint representation.  The kernel of $\mathrm{Ad}$ equals the centre of $\mathrm{SL}(3,K)$.  It has order at most $3$, since there are at most three third roots of unity in $K$.  Write $Z = \ker(\mathrm{Ad}|_G)$.  Then $Z$ is a finite, central subgroup of $G$.  The quotient $\mathrm{Ad}(G) \cong G/Z$ and all its finite-index subgroups are cocompact lattices in $\mathrm{PSL}(3,K)$. So by \cite[Theorem 1.2 and Lemma 2.3]{Vig25arX:StPropHighRankLat}, they are selfless.  Further, they have the rapid decay property by \cite{RamRobSte98haagerupinequality, Lafforgue00propertyrd}.  We can now argue as in the proof of \cref{prp:ThmB}.
\end{exa}

%==========================================================================================
%==========================================================================================

%\bibliographystyle{../../aomalphaMyShort}
%\bibliography{../../References}

\begin{thebibliography}{AGKEP24}

\bibitem[AGKEP24]{AmrGaoElaPat24arX:StrCompRedGpCAlgs}
\bgroup\scshape{}T.~Amrutam\egroup{}, \bgroup\scshape{}D.~Gao\egroup{},
  \bgroup\scshape{}S.~Kunnawalkam~Elayavalli\egroup{}, and
  \bgroup\scshape{}G.~Patchell\egroup{}, Strict comparison in reduced group
  \ca{s}, preprint (arXiv:2412.06031 [math.OA]), 2024.

\bibitem[APT18]{AntPerThi18TensorProdCu}
\bgroup\scshape{}R.~Antoine\egroup{}, \bgroup\scshape{}F.~Perera\egroup{}, and
  \bgroup\scshape{}H.~Thiel\egroup{}, Tensor products and regularity properties
  of {C}untz semigroups,  \emph{Mem. Amer. Math. Soc.} \textbf{251} (2018),
  viii+191.

\bibitem[APTV24]{AntPerThiVil24arX:PureCAlgs}
\bgroup\scshape{}R.~Antoine\egroup{}, \bgroup\scshape{}F.~Perera\egroup{},
  \bgroup\scshape{}H.~Thiel\egroup{}, and \bgroup\scshape{}E.~Vilalta\egroup{},
  Pure \ca{s}, preprint (arXiv:2406.11052 [math.OA]), 2024.

\bibitem[AR26]{AusRau26jimj:DetectIdlsRedCrProd}
\bgroup\scshape{}A.~Austad\egroup{} and \bgroup\scshape{}S.~Raum\egroup{},
Detecting ideals in reduced crossed product {{\(\mathrm{C}^*\)}}-algebras of topological dynamical systems,
\emph{J. Inst. Math. Jussieu} \textbf{25} No. 2 (2026), 841--865.
  
\bibitem[B91]{Bed91GpsSimpleCAlgs}
\bgroup\scshape{}E.~B\'{e}dos\egroup{}, Discrete groups and simple \ca{s},
  \emph{Math. Proc. Cambridge Philos. Soc.} \textbf{109} (1991), 521--537.

\bibitem[BEvV22]{BedEnsVel22SmoothOrbStrComp}
\bgroup\scshape{}E.~B\'{e}dos\egroup{}, \bgroup\scshape{}U.~Enstad\egroup{},
  and \bgroup\scshape{}J.~T. van Velthoven\egroup{}, Smooth lattice orbits of
  nilpotent groups and strict comparison of projections,  \emph{J. Funct.
  Anal.} \textbf{283} (2022), Paper No. 109572.

\bibitem[Bla88]{Bla88Comparison}
\bgroup\scshape{}B.~Blackadar\egroup{}, Comparison theory for simple \ca{s},
  in \emph{Operator algebras and applications, {V}ol.\ 1}, \emph{London Math.
  Soc. Lecture Note Ser.} \textbf{135}, Cambridge Univ. Press, Cambridge, 1988,
  pp.~21--54.

\bibitem[Bro82]{Bro82CohomologyGps}
\bgroup\scshape{}K.~S. Brown\egroup{}, \emph{Cohomology of groups}, \emph{Grad.
  Texts Math.} \textbf{87}, Springer, Cham, 1982.

\bibitem[BK18]{BryKen18ResTwCrProd}
\bgroup\scshape{}R.~S. Bryder\egroup{} and
  \bgroup\scshape{}M.~Kennedy\egroup{}, Reduced twisted crossed products over
  {$C^*$}-simple groups,  \emph{Int. Math. Res. Not. IMRN} (2018), 1638--1655.

\bibitem[EV25]{EnsVil25arX:ZstableTwGp}
\bgroup\scshape{}U.~Enstad\egroup{} and \bgroup\scshape{}E.~Vilalta\egroup{},
  $\mathcal{Z}$-stability of twisted group \ca{s} of nilpotent groups, preprint
  (arXiv:2503.18088 [math.OA]), 2025.

\bibitem[FKO+26]{FloKliOCPa26arX:Pureness}
\bgroup\scshape{}F.~Flores\egroup{}, \bgroup\scshape{}M.~Klisse\egroup{}, \bgroup\scshape{}M.~{\'O} Cobhthaigh\egroup{} and \bgroup\scshape{}M.~Pagliero\egroup{},
Pureness and stable rank one for reduced twisted group {{\(\mathrm{C}^*\)}}-algebras of certain group extensions,
preprint (arXiv:2601.19758 [math.OA]), 2026.
  
\bibitem[GO20]{GerOsi20InvGpCAlgAcylHyperbolic}
\bgroup\scshape{}M.~Gerasimova\egroup{} and \bgroup\scshape{}D.~Osin\egroup{},
  On invertible elements in reduced \ca{s} of acylindrically hyperbolic groups,
   \emph{J. Funct. Anal.} \textbf{279} (2020), 108689, 22.

\bibitem[Gre80]{Gre80StrImprimitivity}
\bgroup\scshape{}P.~Green\egroup{}, The structure of imprimitivity algebras,
  \emph{J. Funct. Anal.} \textbf{36} (1980), 88--104.

\bibitem[dlH88]{Har88GpsHyperboliques}
\bgroup\scshape{}P.~de~la Harpe\egroup{}, Groupes hyperboliques, alg{\`e}bres
  d'op{\'e}rateurs et un th{\'e}or{\`e}me de {Jolissaint}. ({Hyperbolic}
  groups, operator algebras and {Jolissaint}'s theorem),  \emph{C. R. Acad.
  Sci., Paris, S{\'e}r. I} \textbf{307} (1988), 771--774 (French).

\bibitem[HKER25]{HayKunRob25arX:SelflessRedFreeProd}
\bgroup\scshape{}B.~Hayes\egroup{},
  \bgroup\scshape{}S.~Kunnawalkam~Elayavalli\egroup{}, and
  \bgroup\scshape{}L.~Robert\egroup{}, Selfless reduced free product \ca{s},
  preprint (arXiv:2505.13265 [math.OA]), 2025.

\bibitem[HRW07]{HirRorWin07CXAlgStabSSA}
\bgroup\scshape{}I.~Hirshberg\egroup{}, \bgroup\scshape{}M.~R{\o}rdam\egroup{},
  and \bgroup\scshape{}W.~Winter\egroup{}, {$C_0(X)$}-algebras, stability and
  strongly self-absorbing \ca{s},  \emph{Math. Ann.} \textbf{339} (2007),
  695--732.

\bibitem[Jol90]{Jol90RapDecrRedGp}
\bgroup\scshape{}P.~Jolissaint\egroup{}, Rapidly decreasing functions in
  reduced \ca{s} of groups,  \emph{Trans. Amer. Math. Soc.} \textbf{317}
  (1990), 167--196.

\bibitem[Ker20]{Ker20DimCompAlmFin}
\bgroup\scshape{}D.~Kerr\egroup{}, Dimension, comparison, and almost
  finiteness,  \emph{J. Eur. Math. Soc. (JEMS)} \textbf{22} (2020), 3697--3745.

\bibitem[KES25]{KunScha25arX:NegTarski}
\bgroup\scshape{}S.~Kunnawalkam~Elayavalli\egroup{} and
  \bgroup\scshape{}C.~Schafhauser\egroup{}, Negative resolution to the \ca{ic}
  {T}arski problem, preprint (arXiv:2503.10505 [math.OA]), 2025.

\bibitem[Laf00]{Lafforgue00propertyrd}
\bgroup\scshape{}V.~Lafforgue\egroup{},
  A proof of property ({RD}) for cocompact lattices of {{\(\text{SL}_3(\mathbb R)\)}}
  \emph{J. Lie Theory} \textbf{10}, No. 2 (2000), 255--267.
  
\bibitem[MS12]{MatSat12StrComparison}
\bgroup\scshape{}H.~Matui\egroup{} and \bgroup\scshape{}Y.~Sato\egroup{},
  Strict comparison and {$\mathcal{Z}$}-absorption of nuclear \ca{s},
  \emph{Acta Math.} \textbf{209} (2012), 179--196.

\bibitem[MO15]{MinOsi15AcylHypGpsTrees}
\bgroup\scshape{}A.~Minasyan\egroup{} and \bgroup\scshape{}D.~Osin\egroup{},
  Acylindrical hyperbolicity of groups acting on trees,  \emph{Math. Ann.}
  \textbf{362} (2015), 1055--1105.

\bibitem[MO19]{MinOsi19CorrAcylHypGpsTrees}
\bgroup\scshape{}A.~Minasyan\egroup{} and \bgroup\scshape{}D.~Osin\egroup{},
  Correction to: {A}cylindrical hyperbolicity of groups acting on trees,
  \emph{Math. Ann.} \textbf{373} (2019), 895--900.

\bibitem[Osi16]{Osi16AcylHypGps}
\bgroup\scshape{}D.~Osin\egroup{}, Acylindrically hyperbolic groups,
  \emph{Trans. Amer. Math. Soc.} \textbf{368} (2016), 851--888.

\bibitem[Oza25]{Oza25arX:Proximality}
\bgroup\scshape{}N.~Ozawa\egroup{},
Proximality and selflessness for group {{\(\mathrm{C}^*\)}}-algebras,
preprint (arXiv:2508.07938 [math.OA]), 2025.
  
\bibitem[PTV25]{PerThiVil25arX}
  \bgroup\scshape{}F.~Perera\egroup{}, \bgroup\scshape{}H.~Thiel\egroup{} and \bgroup\scshape{}E.~Vilalta\egroup{}, Extensions of pure \ca{s},
  preprint (arXiv:2506.10529 [math.OA]), 2025.

\bibitem[RRS98]{RamRobSte98haagerupinequality}
  \bgroup\scshape{}J.~Ramagge\egroup{}, \bgroup\scshape{}G.~Robertson\egroup{} and \bgroup\scshape{}T.~Steger\egroup{},
  A {Haagerup} inequality for {{\(\widetilde A_1\times\widetilde A_1\)}} and {{\(\widetilde A_2\)}} buildings,
  \emph{Geom. Funct. Anal.} \textbf{8} No. 4 (1998), 702--731.

\bibitem[Rau24]{Rau24arX:TwAcylHypGps}
\bgroup\scshape{}S.~Raum\egroup{}, Twisted group \ca{s} of acylindrically
  hyperbolic groups have stable rank one, Groups, Geom. and Dyn. (to appear),
  preprint (2403.04649 [math.OA]), 2024.

\bibitem[Rob25]{Rob25aim:Selfless}
\bgroup\scshape{}L.~Robert\egroup{},
Selfless {{\(\mathrm{C}^*\)}}-algebras,
\emph{Adv. Math.} \textbf{478} No. 110409 (2025), 1--28.

\bibitem[RT17]{RobTik17NucDimNonSimple}
\bgroup\scshape{}L.~Robert\egroup{} and \bgroup\scshape{}A.~Tikuisis\egroup{},
  Nuclear dimension and {$\mathcal{Z}$}-stability of non-simple \ca{s},
  \emph{Trans. Amer. Math. Soc.} \textbf{369} (2017), 4631--4670.

\bibitem[Rob96]{Rob96CourseThyGps2ed}
\bgroup\scshape{}D.~J.~S. Robinson\egroup{}, \emph{A course in the theory of
  groups}, second ed., \emph{Graduate Texts in Mathematics} \textbf{80},
  Springer-Verlag, New York, 1996.

\bibitem[Thi20]{Thi20RksOps}
\bgroup\scshape{}H.~Thiel\egroup{}, Ranks of operators in simple \ca{s} with
  stable rank one,  \emph{Comm. Math. Phys.} \textbf{377} (2020), 37--76.

\bibitem[Vig25]{Vig25arX:StPropHighRankLat}
\bgroup\scshape{}I.~Vigdorovich\egroup{}, Structural properties of reduced
  \ca{s} associated with higher-rank lattices, preprint (arXiv:2503.12737
  [math.OA]), 2025.

\bibitem[Win12]{Win12NuclDimZstable}
\bgroup\scshape{}W.~Winter\egroup{}, Nuclear dimension and
  {$\mathcal{Z}$}-stability of pure \ca{s},  \emph{Invent. Math.} \textbf{187}
  (2012), 259--342.

\bibitem[Win18]{Win18ICM}
\bgroup\scshape{}W.~Winter\egroup{}, Structure of nuclear \ca{s}: from
  quasidiagonality to classification and back again,  in \emph{Proceedings of
  the {I}nternational {C}ongress of {M}athematicians---{R}io de {J}aneiro 2018.
  {V}ol. {III}. {I}nvited lectures}, World Sci. Publ., Hackensack, NJ, 2018,
  pp.~1801--1823.

\end{thebibliography}

\providecommand{\bysame}{\leavevmode\hbox to3em{\hrulefill}\thinspace}
\providecommand{\noopsort}[1]{}
\providecommand{\mr}[1]{\href{http://www.ams.org/mathscinet-getitem?mr=#1}{MR~#1}}
\providecommand{\zbl}[1]{\href{http://www.zentralblatt-math.org/zmath/en/search/?q=an:#1}{Zbl~#1}}
\providecommand{\jfm}[1]{\href{http://www.emis.de/cgi-bin/JFM-item?#1}{JFM~#1}}
\providecommand{\arxiv}[1]{\href{http://www.arxiv.org/abs/#1}{arXiv~#1}}
\providecommand{\doi}[1]{\url{http://dx.doi.org/#1}}
\providecommand{\MR}{\relax\ifhmode\unskip\space\fi MR }
% \MRhref is called by the amsart/book/proc definition of \MR.
\providecommand{\MRhref}[2]{%
  \href{http://www.ams.org/mathscinet-getitem?mr=#1}{#2}
}
\providecommand{\href}[2]{#2}

\end{document}